\definecolor{metrorange}{RGB}{235,131,32}
\definecolor{myblue}{RGB}{51,51,178}
\definecolor{myred}{RGB}{189,26,26}
\definecolor{mygreen}{RGB}{0,128,0}
\newtheorem{definition}{Definition}
\newtheorem{lemma}{Lemma}
\newtheorem{remark}{Remark}
\newtheorem{theorem}{Theorem}
\newtheorem{conjecture}[lemma]{Conjecture}
\newtheorem{corollary}{Corollary}
\newcommand{\EE}{{\mathbb{E}}}
\newcommand{\dd}{\,{\rm d}}
\newcommand{\PP}{\mathbb{P}}
\newcommand{\N}{\mathbb {N}}
\newcommand{\R}{\mathbb {R}}
\newcommand{\cX}{\mathcal {X}}
\newcommand{\cP}{\mathcal {P}}
\newcommand{\dist}{\mathrm{dist}}
\newcommand{\dtv}{d_{\textsc{tv}}}
\newcommand{\dkl}{d_{\textsc{kl}}}
\newcommand{\vkl}{V_{\textsc{kl}}}
\newcommand{\pim}{p}
\newcommand{\tmix}{t_{\textsc{mix}}}
\newcommand{\diam}{\mathrm{diam}}
\newcommand{\lip}{{\textsc{lip}}}
\newcommand{\var}{{\mathrm{Var}}}
\title{The varentropy criterion is sharp on expanders}
\author{Justin Salez}
\begin{document}
\maketitle
\begin{abstract}
The cutoff phenomenon is an abrupt transition from out of equilibrium to equilibrium
undergone by certain Markov processes in the limit where the size of the state space tends
to infinity: instead of decaying gradually over time, their distance to equilibrium remains close
to the maximal value for a while and suddenly drops to zero as the time parameter reaches a
critical threshold. Despite the accumulation of many examples, this phenomenon is still far from being understood, and identifying the general conditions that trigger it has become one of the biggest challenges in the quantitative analysis of finite Markov chains. Very recently,  the author proposed a general sufficient condition for the occurrence of a cutoff, based on a certain information-theoretical statistics called \emph{varentropy}. In the present paper, we demonstrate the sharpness of this approach by showing that the cutoff phenomenon is actually \emph{equivalent} to the varentropy criterion  for all sparse, fast-mixing chains. Reversibility is not required. 
\end{abstract}
\section{Introduction}

The cutoff phenomenon is a dynamical phase transition which is now believed to be universal among high-dimensional, fast-mixing Markov chains: roughly speaking, the system under consideration abruptly moves from being nearly singular to equilibrium to being statistically indistinguishable from equilibrium when the time parameter reaches a critical value. We shall here only recall the necessary definitions, and refer the reader to the introductory book \cite[Chapter 18]{MR3726904} or the recent paper \cite{arxiv.2102.05597} for a more detailed account  as well as many references.

\subsection{The cutoff phenomenon}
Consider a stochastic matrix $K$ on a finite set $\cX$, and write $(P_t)_{t\ge 0}$ for the  corresponding continuous-time semi-group, defined for all times $t\ge 0$ and states $x,y\in\cX$ by
\begin{eqnarray*}
P_t(x,y) &  := & e^{-t}\sum_{n=0}^\infty \frac{K^n(x,y)t^n}{n!}.
\end{eqnarray*}
Assuming that $K$ is irreducible, the general theory guarantees that $P_t(x,y)\xrightarrow[t\to\infty]{}\pi(y)$, where  $\pi$ is the unique probability vector on $\cX$ solving the stationarity equation $\pi K=\pi$. A standard way to quantify this convergence consists in measuring the time it takes for the \emph{worst-case total variation distance}  to drop below a given threshold $\varepsilon\in(0,1)$:
\begin{eqnarray*}
\tmix(\varepsilon) \ := \ \min\left\{t\ge 0\colon \dtv(t)\le\varepsilon\right\}, & \textrm{ where } &   \dtv(t) := \max_{o\in\cX}\dtv(P_t(o,\cdot),\pi).
\end{eqnarray*}
This quantity is known as the \emph{mixing time} of the process, and understanding how it depends on the underlying dynamics and on the precision $\varepsilon$ is an important problem with numerous applications. This question becomes  particularly relevant when the number of states is large, and one is thus naturally led to consider a \emph{sequence} of stochastic matrices $(K_n)_{n\ge 1}$ whose dimensions tend to infinity, and to examine the asymptotic behavior of their mixing times $\tmix^{(n)}(\varepsilon)$ as  $n\to\infty$. 
 In many situations, a remarkable phase transition known as a \emph{cutoff} has been observed: instead of decaying gradually from $1$ to $0$ as one could reasonably expect, the distance to equilibrium $t\mapsto\dtv^{(n)}(t)$ approaches a step function as $n\to\infty$. Equivalently, its inverse $\varepsilon\mapsto \tmix^{(n)}(\varepsilon)$ becomes asymptotically constant, as illustrated on Figure \ref{fig:cutoff}. 
\begin{definition}[Cutoff phenomenon]The sequence $(K_n)_{n\ge 1}$ is said to exhibit  a cutoff if 
\begin{eqnarray*}
\forall \varepsilon,\varepsilon'\in(0,1),\qquad \frac{\tmix^{(n)}(\varepsilon')}{\tmix^{(n)}(\varepsilon)} & \xrightarrow[n\to\infty]{} & 1.
\end{eqnarray*}
\end{definition} 
\begin{figure}
\begin{center}
\pgfplotsset{compat=1.16}
\pgfplotsset{ticks=none}
\begin{tikzpicture}
	\begin{axis}[
		width = 16cm,
		height = 9cm,
		axis x line=middle,
		axis y line=middle,
		xlabel = $t$,
		clip = false,
		grid=both,
		grid style={dashed, line width=.5pt, draw=gray!10},
		xmode = normal,
		ymode = normal,
		line width = 1pt,
		legend cell align = left,
		legend style = {fill=none, at={(0.9,0.9)}, anchor = north east},
		yticklabel style={above left},
		anchor = north west,
		tickwidth={5pt},
		xtick align = outside,
		ytick align = outside,
		ymax = 1.05,
		ymin = 0,
		xmin = 0,
		xmax = 40,
		x axis line style=-,
		y axis line style=-,
		domain = 0:40,
		samples = 1000
		]
		\def\scale{3}
\addplot[solid,  myred, line width = 1.5 pt] {
-rad(atan(x-20))/rad(atan(20))/2 + 0.5	
	};
\legend
{
	${t \to \dtv(t)}$
}
\draw[dashed, color= myblue] (15,0) -- (15,0.9515);
\draw[dashed, color= myblue] (0,0.9515) -- (15,0.9515);
\draw[dashed, color= mygreen] (25,0) -- (25,0.0484723);
\draw[dashed, color= mygreen] (0,0.0484723) -- (25,0.0484723);	

\draw[color= myblue] (0,0.9515) -- (-0.5,0.9515) node[color = black, anchor=east] {$\textcolor{myblue}{\varepsilon}$};
\draw[color= myblue] (15,0) -- (15,-0.02) node[color=black, anchor=north] {$\textcolor{myblue}{\tmix(\varepsilon)}$};
\draw[color= mygreen] (25,0) -- (25,-0.02) node[color = black, anchor=north] {${\textcolor{mygreen}{\tmix(\varepsilon')}}$};
\draw[color= mygreen] (0,0.0484723) -- (-0.5,0.0484723) node[color = black,anchor=east] {$\textcolor{mygreen}{\varepsilon'}$};

\draw[color= black] (0,1.0) -- (-0.5,1.0) node[color = black,anchor=south east] {${1}$};

\end{axis}
	
\end{tikzpicture}
\caption{A typical plot of the distance to equilibrium over time. In the large size limit, the ratio $\frac{\tmix(\varepsilon')}{\tmix(\varepsilon)}$ approaches $1$ and the convergence to equilibrium becomes abrupt (cutoff).}
\label{fig:cutoff}
\end{center}
\end{figure}
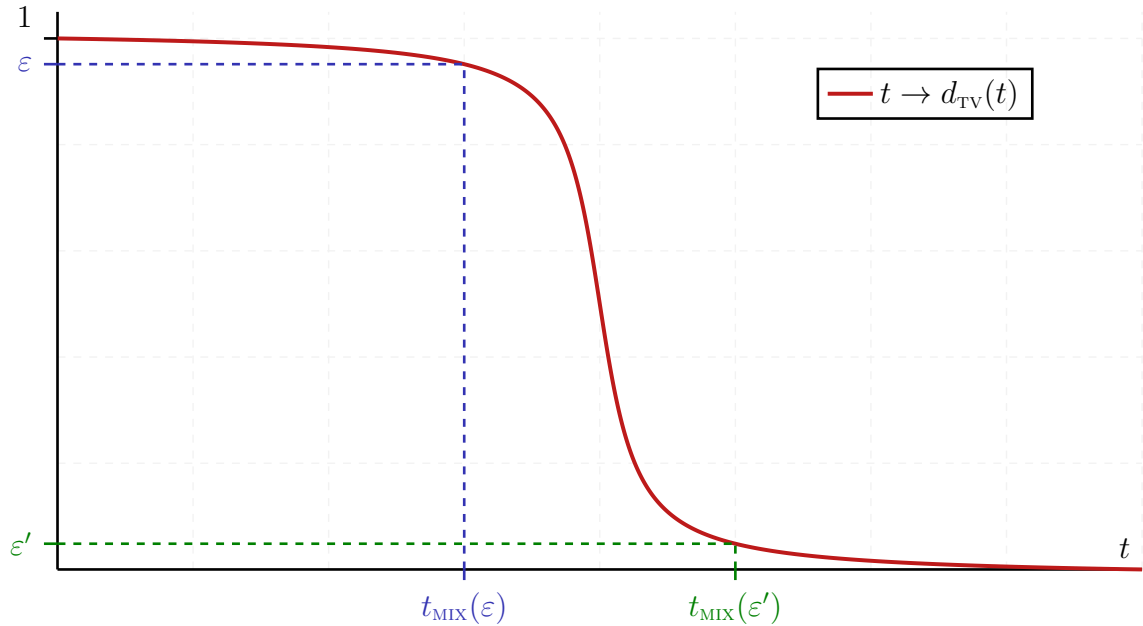 The name  \emph{cutoff} was coined   in 1986 by D. Aldous and P. Diaconis \cite{aldous1986shuffling}, but the phenomenon itself was actually discovered in 1981 by P. Diaconis and M. Shahshahani  \cite{diaconis1981generating}, and several instances of it were collected under the generic name \emph{abrupt switch} in lecture notes published by D. Aldous in 1983 \cite{aldous1983mixing}. Further historical examples can be found in the 1996 survey paper \emph{The cutoff phenomenon in finite Markov chains}, by P. Diaconis \cite{diaconis1996cutoff}.  Since then, cutoff phenomena have been observed  in a broad variety of contexts, including birth and death chains, random walks on finite groups, high-temperature spin glasses, interactive particle systems, or random walks on various models of sparse random graphs. 

Unfortunately, the existing proofs essentially all consist in bounding $\tmix^{(n)}(\varepsilon)$ from above and below by explicit quantities which lie within a factor $1+o(1)$ from each other and are asymptotically independent of $\varepsilon$. This is of course a notoriously difficult and model-specific task, which can only be carried out on very structured examples, and which does not bring any conceptual insight as to why a sharp transition actually occurs. Identifying the general conditions that trigger the cutoff phenomenon has become one of the biggest challenges in the quantitative analysis of finite Markov chains. Very recently, a new approach to this question was proposed in \cite{arxiv.2102.05597}, based on the estimation of a certain information-theoretical statistics called \emph{varentropy}.

\subsection{The varentropy criterion}

Let us start by recalling a more classical definition: the \emph{relative entropy} (or Kullback-Leibler divergence) of a probability measure $\mu$ on our reference space $(\cX,\pi)$  is given by
\begin{eqnarray*}
\dkl(\mu,\pi) & := & \EE_\mu\left[\log\frac{\mu}{\pi}\right] \ = \ \sum_{x\in\cX}\mu(x)\log\frac{\mu(x)}{\pi(x)}.
\end{eqnarray*}
This famous information-theoretic statistics provides an upper bound on the total-variation distance $\dtv(\mu,\pi)$, by virtue of the Csiszár-Kullback-Pinsker Inequality. Moreover, its evolution under the semi-group $(P_t)_{t\ge 0}$ can be controlled in a systematic way by establishing an appropriate Log-Sobolev Inequality \cite{MR1410112}, or its modified version \cite{MR2283379}. The combination of those two simple observations is at the origin of some of the most powerful  bounds on mixing times (see the textbook \cite{MR2341319}). In order to quantify the sharpness of the transition to equilibrium, the author proposed in \cite{arxiv.2102.05597} to investigate the evolution of a suitable \emph{second-order} version of relative entropy, obtained by replacing the mean by a variance:
\begin{eqnarray*}
\vkl(\mu,\pi) & := & \var_\mu\left(\log\frac{\mu}{\pi}\right)  \ = \ \sum_{x\in\cX}\mu(x)\left(\log\frac{\mu(x)}{\pi(x)}-\dkl(\mu,\pi)\right)^2.
\end{eqnarray*}
Because it measures the dispersion of information around the entropy, this natural statistics is   called \emph{varentropy}. It appeared a decade ago in the completely different context of optimal data compression, to quantify the error in the celebrated \emph{Asymptotic Equipartition Property} \cite{inproceedings}. However, its   relevance for cutoff -- embodied in Corollary \ref{co:main} below -- was discovered only very recently. More precisely, let us define the \emph{worst-case varentropy} of our Markov chain at any given time $t\ge 0$ as follows:
\begin{eqnarray*}
\vkl(t) & := & \max_{o\in\cX}\vkl\left(P_t(o,\cdot),\pi\right).
\end{eqnarray*}
Let also $\gamma=\gamma(K)$ denote the \emph{Poincaré constant} of the chain, which is well known to coincide with the spectral gap of the reversibilized transition matrix $(K+K^\star)/2$. It is perhaps worth mentioning that this fundamental parameter is, unlike many others, extremely well understood: its order of magnitude is known in many concrete models  (see \cite{MR2341319} for details). 
\begin{theorem}[Width of the mixing window \cite{arxiv.2102.05597}]\label{th:varentropy}
For any $\varepsilon \in(0,1/2)$,  
\begin{eqnarray*}
\tmix(\varepsilon)-\tmix(1-\varepsilon) & \le & \frac{2}{\gamma \varepsilon^2}\left(1+\sqrt{\vkl(\tmix(1-\varepsilon))}\right).
\end{eqnarray*}
\end{theorem}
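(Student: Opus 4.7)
Plan of proof. Set $s := \tmix(1-\varepsilon)$ and $V := \vkl(s)$. For an arbitrary initial state $o\in\cX$, write $\mu_t := P_t(o,\cdot)$, $h_t := \log(\mu_t/\pi)$, $D_t := \dkl(\mu_t,\pi)$, and denote the $\chi^2$-divergence by $\chi^2(\mu,\pi) := \var_\pi(\mu/\pi)$. The aim is to show that $\dtv(\mu_{s+u},\pi) \le \varepsilon$ for some $u \le \frac{2(1+\sqrt V)}{\gamma\varepsilon^2}$, uniformly in $o$. Three ingredients enter the argument.

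First, Chebyshev's inequality applied to $h_s$ under $\mu_s$, using $\var_{\mu_s}(h_s) \le V$, gives
$$\mu_s\{|h_s-D_s|>\alpha\} \ \le\ V/\alpha^2 \qquad (\alpha>0).$$
Decompose $\mu_s = p\tilde\nu + (1-p)\tilde\rho$ where $\tilde\nu$ is the normalized restriction of $\mu_s$ to $\{|h_s-D_s|\le\alpha\}$; then $1-p \le V/\alpha^2$ and $\chi^2(\tilde\nu,\pi) \le \|\tilde\nu/\pi\|_\infty \le e^{D_s+\alpha}/p$. Second, the Poincaré inequality for the reversibilized chain gives $\chi^2(\tilde\nu P_u,\pi) \le e^{-2\gamma u}\,\chi^2(\tilde\nu,\pi)$; combining with $\dtv \le \tfrac{1}{2}\sqrt{\chi^2}$ and the triangle inequality then yields
$$\dtv(\mu_{s+u},\pi) \ \le\ \tfrac{1}{\sqrt{2}}\,e^{-\gamma u+(D_s+\alpha)/2} + V/\alpha^2. \qquad (\dagger)$$

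The third and most crucial ingredient bounds $D_s$ \emph{a posteriori}. Apply Chebyshev in the opposite direction to $A := \{h_s \ge D_s - \alpha\}$: then $\mu_s(A)\ge 1 - V/\alpha^2$, while $\pi(A) \le e^{\alpha - D_s}$ by the very definition of $h_s$. Combined with the hypothesis $\dtv(\mu_s,\pi)\le 1-\varepsilon$, the inequality $\dtv(\mu_s,\pi) \ge \mu_s(A) - \pi(A)$ rearranges to
$$D_s \ \le\ \alpha + \log\frac{1}{\varepsilon - V/\alpha^2}\qquad\text{whenever}\quad V/\alpha^2 < \varepsilon.$$
Choosing $\alpha := \sqrt{2V/\varepsilon}$ makes $V/\alpha^2 = \varepsilon/2$; substituting this bound on $D_s$ into $(\dagger)$ and solving for the smallest $u$ ensuring $\dtv(\mu_{s+u},\pi) \le \varepsilon$ leads, after elementary optimization and a crude absorption of logarithmic terms into the $\varepsilon^{-2}$ factor, to the stated estimate $u \le \frac{2(1+\sqrt V)}{\gamma\varepsilon^2}$. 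Since $o$ was arbitrary, the bound passes to the worst-case $\dtv$.

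The main obstacle is the third step. A priori the entropy $D_s$ is \emph{not} controlled by $\dtv(\mu_s,\pi)\le 1-\varepsilon$, and without taming it the exponential factor $e^{D_s/2}$ in the $L^2$-decay estimate would be useless. The bootstrap exploits the varentropy hypothesis twice: once upwards (to concentrate $h_s$ for the truncation) and once downwards (to force $\dtv(\mu_s,\pi)$ close to $1$ whenever $D_s$ is large), yielding a compatible pair of estimates that close the argument. The final $\varepsilon^{-2}$ dependence reflects that only a second-moment concentration (Chebyshev) is used, not any sharper exponential tail bound on $h_s$---something the varentropy hypothesis alone does not supply.
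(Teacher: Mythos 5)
Your argument is correct: the Chebyshev truncation of the information density at level $e^{D_s+\alpha}$ combined with Poincar\'e $L^2$-decay for the reversibilized chain, together with the Markov/Chebyshev lower bound $\dtv(\mu_s,\pi)\ge \mu_s(A)-\pi(A)$ used to control $D_s$ \emph{a posteriori}, yields $\tmix(\varepsilon)\le \tmix(1-\varepsilon)+\frac{1}{\gamma}\bigl(\sqrt{2V/\varepsilon}+\tfrac{3}{2}\log\tfrac{1}{\varepsilon}+O(1)\bigr)$, and these terms are indeed absorbed by $\frac{2}{\gamma\varepsilon^2}\left(1+\sqrt{V}\right)$ for $\varepsilon\in(0,1/2)$ (the only caveat is the degenerate case $V=0$, where your choice $\alpha=\sqrt{2V/\varepsilon}$ vanishes and one should instead let $\alpha\downarrow 0$ or add a constant to $\alpha$). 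Note that the present paper states Theorem \ref{th:varentropy} without proof, quoting it from \cite{arxiv.2102.05597}; your bootstrap (concentration of $\log(\mu_s/\pi)$ upwards for the truncation, downwards to force $\dkl$ small when $\dtv\le 1-\varepsilon$) is essentially the argument of that reference, so there is no genuinely different route to compare.
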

To the best of our knowledge, Theorem \ref{th:varentropy} constitutes the very first general quantitative estimate on the width of the mixing window. 
Since the occurrence of a cutoff is just the assertion that this width is asymptotically negligible compared to the position of the window along the time axis, we readily obtain the following general  criterion for cutoff. 
\begin{corollary}[Varentropy criterion]\label{co:main}
A sufficient condition for $(K_n)_{n\ge 1}$ to exhibit cutoff is 
\begin{eqnarray}
\label{VC}
\gamma(K_n)\times \tmix^{(n)}(\varepsilon) & \gg & 1+\sqrt{\vkl^{(n)}(\tmix^{(n)}(\varepsilon))},
\end{eqnarray}
for each fixed $\varepsilon\in (0,1)$, where $a_n\gg b_n$ means that the ratio $a_n/b_n$ tends to $+\infty$ as $n\to\infty$. 
\end{corollary}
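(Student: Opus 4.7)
The plan is essentially a one-step deduction from Theorem \ref{th:varentropy}, with the only preparatory work being a reduction of the definition of cutoff to a statement about the single pair of thresholds $(\varepsilon, 1-\varepsilon)$. I do not anticipate any genuine obstacle: the corollary is precisely what Theorem \ref{th:varentropy} was designed for, and the argument boils down to a dimensional check.

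First I would verify that cutoff is equivalent to the following \emph{width condition}: for each fixed $\varepsilon \in (0,1/2)$,
\begin{equation*}
\tmix^{(n)}(\varepsilon) - \tmix^{(n)}(1-\varepsilon) \ = \ o\bigl(\tmix^{(n)}(1-\varepsilon)\bigr).
\end{equation*}
The forward implication is trivial. For the converse, fix arbitrary $\varepsilon_1,\varepsilon_2 \in (0,1)$, assume without loss of generality that $\varepsilon_1 \leq \varepsilon_2$, and pick an auxiliary parameter $\varepsilon \in \bigl(0,\min(\varepsilon_1, 1-\varepsilon_2)\bigr)$. The monotonicity of $\varepsilon \mapsto \tmix^{(n)}(\varepsilon)$ yields the sandwich $\tmix^{(n)}(\varepsilon) \geq \tmix^{(n)}(\varepsilon_1) \geq \tmix^{(n)}(\varepsilon_2) \geq \tmix^{(n)}(1-\varepsilon)$, so
\begin{equation*}
1 \ \leq \ \frac{\tmix^{(n)}(\varepsilon_1)}{\tmix^{(n)}(\varepsilon_2)} \ \leq \ 1 \, + \, \frac{\tmix^{(n)}(\varepsilon) - \tmix^{(n)}(1-\varepsilon)}{\tmix^{(n)}(1-\varepsilon)},
\end{equation*}
and the right-hand side tends to $1$ under the width condition.

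Finally I would apply Theorem \ref{th:varentropy} directly and divide through by $\tmix^{(n)}(1-\varepsilon)$ to obtain
\begin{equation*}
\frac{\tmix^{(n)}(\varepsilon) - \tmix^{(n)}(1-\varepsilon)}{\tmix^{(n)}(1-\varepsilon)} \ \leq \ \frac{2}{\varepsilon^2} \cdot \frac{1 + \sqrt{\vkl^{(n)}\bigl(\tmix^{(n)}(1-\varepsilon)\bigr)}}{\gamma(K_n)\,\tmix^{(n)}(1-\varepsilon)}.
\end{equation*}
For fixed $\varepsilon$, the prefactor $2/\varepsilon^2$ is a constant, while the second factor tends to $0$ by the hypothesized varentropy criterion \eqref{VC} applied at the fixed level $\varepsilon' := 1-\varepsilon \in (0,1)$. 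This delivers the width condition, and hence cutoff.
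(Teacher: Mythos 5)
Your proof is correct and follows essentially the same route as the paper, which simply observes that cutoff amounts to the window width $\tmix^{(n)}(\varepsilon)-\tmix^{(n)}(1-\varepsilon)$ being negligible compared to its position and then cites Theorem \ref{th:varentropy}; you have merely spelled out the routine reduction of the two-parameter cutoff definition to this width condition and the application of the hypothesis \eqref{VC} at the level $1-\varepsilon$, both of which are exactly what the paper leaves implicit.
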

In the present form,  Corollary \ref{co:main} is much more a starting point  than a definitive answer to our main problem. Indeed, the varentropy term appearing on the right-hand side is a new and highly non-trivial statistics, whose  estimation remains entirely to be developed before it can be effectively  used to explain and predict cutoff. A first step in that direction was made in \cite{arxiv.2102.05597}, where a simple and naive estimate on the varentropy function $t\mapsto \vkl(t)$ was established for all Markov chains with non-negative curvature, leading to a unified proof of cutoff for a broad family of models. This successful first application raises hopes that the varentropy criterion could constitute the long-sought common mechanism underlying all cutoff phenomena. The purpose of this paper is to provide a rigorous support to this claim.

\subsection{Main result}
In the present paper, we demonstrate the sharpness of the varentropy approach by showing that the cutoff phenomenon is actually \emph{equivalent} to the varentropy criterion for all \emph{sparse} and \emph{fast-mixing} chains.  We emphasize that reversibility is not needed here: we shall  only require that the support of $K$ (i.e., the set of allowed transitions) is symmetric:
\begin{eqnarray}
\label{support}
\forall x,y\in\cX,\qquad K(x,y)>0 & \Longrightarrow & K(y,x)>0.
\end{eqnarray}
We recall that $\gamma=\gamma(K)$ denotes the Poincaré constant of $K$, and we define $\delta=\delta(K)$ as the smallest non-zero entry of $K$. This simple parameter controls the sparsity of the chain, since each row of the stochastic matrix $K$ can not have more than $1/\delta(K)$ non-zero entries.
\begin{theorem}[Sharpness of the varentropy criterion]\label{th:main}Let $(K_n)_{n\ge 1}$ be any sequence of transition matrices with symmetric supports and satisfying the following conditions:
\begin{enumerate}
\item[A1.] Sparsity: $\inf_{n\ge 1}\delta(K_n)>0$.
\item[A2.] Expansion: $\inf_{n\ge 1}\gamma(K_n)>0$.
\end{enumerate}
Then, the sequence $(K_n)_{n\ge 1}$ exhibits cutoff if and only if the varentropy criterion (\ref{VC}) holds. 
\end{theorem}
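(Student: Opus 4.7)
The direction ``(\ref{VC}) $\Rightarrow$ cutoff'' is Corollary~\ref{co:main}, obtained from Theorem~\ref{th:varentropy} without invoking either A1 or A2. The novel content of Theorem~\ref{th:main} is therefore the converse: \emph{under A1 and A2, cutoff implies} (\ref{VC}). Since A2 keeps $\gamma(K_n)$ bounded away from zero, and A1+A2 (with $|\cX_n|\to\infty$) force $\tmix^{(n)}(\varepsilon)\to\infty$ via a standard diameter-versus-gap comparison, (\ref{VC}) reduces to proving
\begin{eqnarray*}
\sqrt{\vkl^{(n)}(\tmix^{(n)}(\varepsilon))} & = & o\bigl(\tmix^{(n)}(\varepsilon)\bigr).
\end{eqnarray*}

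\textbf{Key lemma: a reverse of Theorem~\ref{th:varentropy}.} The plan is to establish, under A1 and A2, a quantitative converse to Theorem~\ref{th:varentropy}, asserting that the mixing window is not only an upper but also a \emph{lower} bound on $\sqrt{\vkl(\tmix)}$. Concretely, I aim for
\begin{eqnarray*}
\tmix(\varepsilon)-\tmix(1-\varepsilon) & \ge & c\,\sqrt{\vkl(\tmix(\varepsilon))} \,-\, c^{-1},
\end{eqnarray*}
where $c=c(\delta_0,\gamma_0,\varepsilon)>0$. Once this is in hand, the cutoff hypothesis forces the left-hand side to be $o(\tmix(\varepsilon))$, so $\sqrt{\vkl(\tmix(\varepsilon))}=o(\tmix(\varepsilon))$, which is precisely the reduced form of (\ref{VC}) displayed above.

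\textbf{Proof sketch of the lemma.} The argument translates varentropy (a \emph{spatial} spread of $\log(\mu/\pi)$) into a \emph{temporal} spread for $\dtv$ in three steps. (i) \emph{Chebyshev.} Setting $V:=\vkl(P_t(o,\cdot),\pi)$ and $D:=\dkl(P_t(o,\cdot),\pi)$ for the starting point $o$ realising the worst-case varentropy at time $t$, a constant fraction of the mass of $P_t(o,\cdot)$ lives on the high-density set $A_t:=\{y\colon\log(P_t(o,y)/\pi(y))\ge D+\tfrac12\sqrt{V}\}$. (ii) \emph{Time-Lipschitz control of the log-density.} Using the forward equation, the stationarity identity $\pi K=\pi$ and the symmetric-support hypothesis~(\ref{support}), A1 yields a constant $L=L(\delta_0)$ and a subset $G_t\subseteq\cX$ of asymptotically full $P_t(o,\cdot)$-mass, on which $|\partial_s\log(P_s(o,y)/\pi(y))|\le L$. (iii) \emph{Time-shift.} Choosing $s=\sqrt{V}/(4L)$, the log-density on $A_t\cap G_t$ still exceeds $\tfrac14\sqrt{V}$ at time $t+s$, so a Markov-type bound keeps $\dtv(t+s)$ above a positive universal constant. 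This forces $\tmix(\varepsilon_0)\ge t+\sqrt{V}/(4L)$ for some universal $\varepsilon_0>0$; plugging $t:=\tmix(1-\varepsilon)$ and using monotonicity of $\varepsilon\mapsto\tmix(\varepsilon)$ to pass from $\varepsilon_0$ to arbitrary $\varepsilon$ then gives the lemma.

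\textbf{Main obstacle.} The delicate step is (ii): $\partial_s\log P_s(o,y)$ cannot be uniformly bounded over all $y$, since $P_s(o,y)$ can be exponentially small for $y$ far from $o$. The plan is to exploit A1 to show that at times $t\gtrsim 1$ the set $G_t:=\{y\colon P_t(o,y)\ge\pi(y)\,e^{-Ct}\}$ carries all but an $o(1)$ fraction of the $P_t(o,\cdot)$-mass, and that on $G_t$ the forward equation together with $\pi K=\pi$ provides the required uniform Lipschitz bound in $s$. The $o(1)$ excluded mass is comfortably absorbed by the Chebyshev cushion from~(i). Non-reversibility causes no additional difficulty: throughout the argument, $K$ enters only through its stationarity relation and through the sparsity bound A1, while A2 is invoked only through the global comparison between diameter and mixing time.
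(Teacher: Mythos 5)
Your reduction (only ``cutoff $\Rightarrow$ (\ref{VC})'' needs proof, and under A2 this amounts to $\sqrt{\vkl^{(n)}(\tmix^{(n)}(\varepsilon))}=o(\tmix^{(n)}(\varepsilon))$ together with $\tmix^{(n)}(\varepsilon)\to\infty$) is exactly right and matches the paper. The problem is the probabilistic heart of your key lemma. In step (i), Chebyshev's inequality only gives \emph{upper} bounds on tail masses: knowing $\EE_\mu[\log(\mu/\pi)]=D$ and $\var_\mu(\log(\mu/\pi))=V$ does not imply that a constant fraction of the $\mu$-mass sits on $A_t=\{\log(\mu/\pi)\ge D+\tfrac12\sqrt V\}$ -- the variance can be carried entirely by a vanishing-probability event in the lower tail (whose depth is only constrained in terms of $D$, which your lemma does not control). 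So the anti-concentration you need simply is not available from $V$ alone, and the stated reverse-window inequality is unproved at its first step. There is a second gap in step (iii): even granting (i) and (ii), what you control is the $P_t(o,\cdot)$-mass of $A_t\cap G_t$, whereas the total-variation lower bound at time $t+s$ requires a set with large $P_{t+s}(o,\cdot)$-mass and small $\pi$-mass; a Lipschitz bound on $\partial_s\log(P_s(o,y)/\pi(y))$ keeps the \emph{density ratio} high on $A_t\cap G_t$ but does not prevent the walk from having left that set after a time $s\asymp\sqrt V$, which in the relevant regime is of order $t$. (A smaller, repairable issue: your lemma features $\vkl(\tmix(\varepsilon))$ while the sketch runs at $t=\tmix(1-\varepsilon)$; monotonicity of $\tmix$ does not transfer varentropy between different times, though one can rerun the argument directly at $t=\tmix(\varepsilon)$ against a later threshold.)

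The paper sidesteps both difficulties by arguing in the opposite logical direction, needing only \emph{concentration}, never anti-concentration. From cutoff it takes $s_n=o(t_n)$ with $\dtv^{(n)}(t_n+s_n)\to0$, converts this into $\dkl^{(n)}(t_n+s_n)\ll\log(1/p_n)$ by a reversed Pinsker inequality (its Lemma \ref{lm:pinsker}), then moves \emph{backwards} in time using the entropy-regularity bound $\dkl(t)\le\dkl(t+s)+cs$ (Lemma \ref{lm:slow}, itself a consequence of the spatial Lipschitz estimate of Lemma \ref{lm:lip}, valid under A1 and (\ref{support}) for $t\ge\diam(\cX)/4$) to get $\dkl^{(n)}(t_n)\ll\log(1/p_n)$. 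Markov's inequality applied to $F(u)=\log u+1/u-1$ then gives $\log Z_n/\log(1/p_n)\to0$ in probability, and the two-sided a.s. bound $-c\,\diam(\cX_n)\le\log Z_n\le\log(1/p_n)$ (Lemma \ref{lm:uniform}), combined with $t_n\asymp\diam(\cX_n)\asymp\log(1/p_n)$ from Lemmas \ref{lm:tmix}--\ref{lm:pim}, upgrades this to $L^2$, i.e. $\vkl^{(n)}(t_n)\ll t_n^2$. Your step (ii) is in fact the sound part of your plan -- it is essentially Lemma \ref{lm:lip}, and no exceptional set $G_t$ is even needed once $t\ge\diam(\cX)/4$ -- but the paper uses this regularity to control the decay of entropy and the range of $\log Z_n$, not to run a pointwise-in-$y$ time-shift of a high-density set. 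To salvage your route you would have to replace step (i) by an argument exploiting the a.s. bounds of order $t$ and a case distinction on whether $D$ is comparable to $t$ (handled by reverse Pinsker), at which point you would essentially be reconstructing the paper's proof in contrapositive form.
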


\begin{remark}[Cheeger inequalities] The \emph{isoperimetric constant} of the chain is defined as
\begin{eqnarray*}
\Phi  & := & \min_{\emptyset\subsetneq A\subsetneq \cX }\left\{\frac{\vec{\pi}(A\times A^c)}{\pi(A)\wedge \pi(A^c)}\right\},
\end{eqnarray*}
where $\vec{\pi}(x,y):=\pi(x)K(x,y)$ is the  stationary flow  on $\cX^2$. Cheeger inequalities state that
\begin{eqnarray*}
\frac{\Phi^2}{2} \ \le & \gamma & \le \ 2\Phi.
\end{eqnarray*}
 Consequently,  Assumption A2 is equivalent to $\inf_{n\ge 1}\Phi(K_n)>0$, hence the  name \emph{expansion}.
\end{remark}

Before we dive into the proof, let us briefly discuss the emblematic case of simple random walk on a finite undirected graph $G=(\cX,E)$, which corresponds to the transition matrix 
\begin{eqnarray*}
K(x,y) & :=& \left\{
\begin{array}{ll}
\frac{1}{\deg(x)} & \textrm{if }\{x,y\}\in E\\
0 & \textrm{else}.
\end{array}
\right.
\end{eqnarray*}
Note that the symmetry condition (\ref{support}) is automatically fulfilled here, and that $\delta(K)$ is simply the inverse of the maximum vertex degree.  Sequences of graphs whose transition matrices $(K_n)_{n\ge 1}$ satisfy Assumptions A1 and A2 are famously known as \emph{expanders}. Those remarkable graphs enjoy nearly as good connectivity properties as complete graphs, but at a much lower cost in terms of edges. Consequently, they have found numerous practical applications,  some of which are described in the beautiful survey paper \cite{MR2247919} by S. Hoory, N. Linial and A. Wigderson. Understanding when they exhibit cutoff is arguably one of the most famous open problems in the field  (see \cite[Open Question 34]{peresamerican} or \cite[Question 5]{MR3726904}), but to the best of our knowledge, no progress has been recorded beyond the  extreme case of {Ramanujan graphs} \cite{MR3558308,MR3693771,MR4178418,MR4476123} or the very special setup of random environments \cite{MR2667423,MR3758735,MR3650414,MR3773804,MR4132638,MR4385126,MR4476123,arxiv.2212.04469}. Our main result reduces this general problem to a varentropy estimate.
\begin{corollary}[Expanders]An expander sequence $(G_n)_{n\ge 1}$ exhibits cutoff if and only if
\begin{eqnarray*}
\forall\varepsilon\in(0,1),\qquad \vkl^{(n)}\left(\tmix^{(n)}(\varepsilon)\right) & \ll & (\log |G_n|)^2.
\end{eqnarray*}
\end{corollary}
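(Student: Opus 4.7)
The plan is to deduce the corollary directly from Theorem~\ref{th:main} together with the classical fact that on a bounded-degree expander, $\tmix^{(n)}(\varepsilon)$ is of order $\log|G_n|$. Since by definition an expander sequence satisfies assumptions A1 and A2, Theorem~\ref{th:main} says that cutoff is equivalent to the varentropy criterion (\ref{VC}), and the only remaining task is to rewrite (\ref{VC}) in the simplified form stated. Everything will follow once the two-sided estimate $\gamma(K_n)\tmix^{(n)}(\varepsilon) \asymp \log|G_n|$ is established.

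For the upper bound on $\tmix^{(n)}(\varepsilon)$, I would use the standard spectral mixing-time bound for reversible chains (SRW on an undirected graph is automatically reversible with respect to $\pi(x)\propto\deg(x)$), which gives $\tmix(\varepsilon) = O\bigl(\log(1/\pi_{\min})/\gamma\bigr)$; since $\pi_{\min} = \Omega(1/|G_n|)$ (a consequence of the bounded degree forced by A1) and $\gamma$ is bounded below by A2, this is $O(\log|G_n|)$. For the lower bound, I would invoke a classical volume-growth argument: letting $\Delta := 1/\delta$ denote the maximum degree, one has $|B(o,r)| \le \Delta^{r+1}$, so the walk cannot reach a set of stationary mass close to $1$ in fewer than $\Omega(\log_\Delta|G_n|)$ jumps. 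A one-line Chernoff bound on the Poisson clock then shows that $P_t(o,\cdot)$ charges $B(o,2t)$ up to an exponentially small remainder, which forces $\tmix(\varepsilon) = \Omega(\log|G_n|)$.

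Combining $\gamma(K_n) \asymp 1$ with $\tmix^{(n)}(\varepsilon) \asymp \log|G_n|\to\infty$, the left-hand side of (\ref{VC}) is of order $\log|G_n|$, and in particular swallows the additive~$1$ on the right-hand side. Therefore (\ref{VC}) is equivalent to $\sqrt{\vkl^{(n)}(\tmix^{(n)}(\varepsilon))} \ll \log|G_n|$, which is exactly the condition $\vkl^{(n)}(\tmix^{(n)}(\varepsilon)) \ll (\log|G_n|)^2$ claimed in the corollary. Both directions of the equivalence are immediate from this rewriting.

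I do not foresee any serious obstacle in this plan: the whole argument is a short piece of bookkeeping on top of Theorem~\ref{th:main}. The only mildly non-trivial input is the lower bound $\tmix^{(n)}(\varepsilon) = \Omega(\log|G_n|)$, but this is a folklore consequence of bounded-degree volume growth combined with Poisson concentration, and the matching upper bound is just the standard $L^2$ spectral estimate.
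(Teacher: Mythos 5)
Your proposal is correct and follows essentially the same route as the paper: the corollary is pure bookkeeping on top of Theorem~\ref{th:main}, once one knows $\gamma(K_n)\asymp 1$ and $\tmix^{(n)}(\varepsilon)\asymp\log|G_n|$, so that the left-hand side of (\ref{VC}) is of order $\log|G_n|$ and the criterion reduces to $\vkl^{(n)}(\tmix^{(n)}(\varepsilon))\ll(\log|G_n|)^2$. The only (cosmetic) difference is that you re-derive the lower bound $\tmix^{(n)}(\varepsilon)=\Omega(\log|G_n|)$ by a volume-growth plus Poisson-clock argument, whereas the paper gets the same two-sided estimate from its Lemmas~\ref{lm:tmix} and~\ref{lm:pim}, which give $\tmix^{(n)}(\varepsilon)\asymp\log\frac{1}{p_n}\asymp\diam(\cX_n)$, with $\log\frac{1}{p_n}\asymp\log|G_n|$ under bounded degrees.
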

We hope that this simple characterization will motivate the development of a general theory for estimating the varentropy of Markov chains. In particular, we would like to advertise the following fascinating conjecture, which was explicitly raised by D. Levin and Y. Peres \cite[Question 5]{MR3726904}. 
\begin{conjecture}[Transitive expanders]\label{pb:exp}All vertex-transitive expanders exhibit cutoff.
\end{conjecture}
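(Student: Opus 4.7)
By the Expanders corollary it suffices to show that, for every fixed $\varepsilon\in(0,1)$,
\begin{eqnarray*}
\vkl^{(n)}\bigl(\tmix^{(n)}(\varepsilon)\bigr) &\ll& (\log |G_n|)^2.
\end{eqnarray*}
Write $N=|G_n|$. Since $G_n$ is vertex-transitive, the stationary measure $\pi$ is uniform on $G_n$ and every basepoint $o$ achieves the worst case, so the varentropy simplifies to $\var_{P_t(o,\cdot)}(\iota_t)$, where $\iota_t(x):=-\log P_t(o,x)$ is the \emph{information function} evaluated under the walk's own law. The task is to show that $\iota_t$ concentrates on a scale $o(\log N)$ at $t=\tmix^{(n)}(\varepsilon)$.

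\textbf{Exploiting transitivity.} Let $\Gamma_n$ denote the automorphism group of $G_n$, which acts transitively on vertices. The heat kernel is equivariant, $P_t(\phi x,\phi y)=P_t(x,y)$ for every $\phi\in\Gamma_n$, so $\iota_t$ is constant on the orbits of the stabilizer of $o$. Two inputs become available: translation by $\Gamma_n$ lets one treat $\EE_{P_t(o,\cdot)}[\iota_t]=H(P_t(o,\cdot))$ as a single, basepoint-free quantity whose growth one should control from below using expansion; and the spectral decomposition of $P_t$ over the irreducible $\Gamma_n$-representations gives access to character-ratio estimates, which on the Ramanujan-like portion of the spectrum are known to be very sharp.

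\textbf{Proof strategy.} At $t=\Theta(\log N)$ on a bounded degree expander, the Carne-Varopoulos bound gives $\iota_t(x)\ge d(o,x)^2/(2t)-O(1)$, while the trivial bound $\iota_t(x)\le \log N+O(1)$ holds at the mixing time. So $\iota_t$ has range $O(\log N)$, yielding only the trivial bound $\vkl=O((\log N)^2)$. The plan is to save an $\omega(1)$ factor in two steps. First, show that $d(o,X_t)$ concentrates on a window of width $o(\log N)$ around a deterministic radius $r_t=\Theta(\log N)$ when $X_t\sim P_t(o,\cdot)$; this is a ballisticity statement that should follow from transitivity together with the Poincaré inequality. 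Second, show that conditionally on $d(o,X_t)=r$ the residual fluctuations of $\iota_t(X_t)$ are themselves $o(\log N)$; this is a sphere-uniformity statement for the heat kernel that should follow from equivariance under the stabilizer of $o$, reducing the analysis to a problem on a quotient of size much smaller than $N$.

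\textbf{Main obstacle.} Bounded degree expanders admit modified log-Sobolev constants of order only $1/\log N$, placing the problem exactly at the threshold where Herbst-type concentration produces Gaussian fluctuations of width $\Theta(\log N)$, i.e., precisely the regime where the varentropy bound degenerates. Closing this logarithmic gap cannot be achieved by purely functional-analytic reasoning applicable to arbitrary bounded-degree chains, and must genuinely exploit the group symmetry. Quantifying the sphere-uniformity of the heat kernel appears to require either a representation-theoretic decomposition with fine control on character ratios, or a coupling argument driven directly by the transitive action that identifies the chain at time $t$ with a smoothed version of itself at a nearby time $t+o(t)$. This is presumably why the conjecture has remained open, and any genuine proof will hinge on making one of these two tools quantitative in the general vertex-transitive expander setting.
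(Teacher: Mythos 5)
The statement you are addressing is not a theorem of the paper but an open conjecture (raised by Levin and Peres and recorded here as Conjecture \ref{pb:exp}); the paper offers no proof of it, only the remark that it might follow from a universal varentropy estimate for vertex-transitive expanders. Your proposal does not close this gap either, and you say so yourself: after correctly invoking the Expanders corollary to reduce the conjecture to the estimate $\vkl^{(n)}(\tmix^{(n)}(\varepsilon)) \ll (\log |G_n|)^2$ --- which is exactly the reduction the paper itself advertises --- the two substantive steps of your plan (concentration of $\dist(o,X_t)$ in a window of width $o(\log N)$, and sphere-uniformity of the heat kernel conditionally on the distance) are left entirely unproved. The Carne--Varopoulos lower bound and the uniform heat-kernel upper bound only confine $\iota_t$ to a range of order $\log N$, which reproduces the trivial bound $\vkl = O((\log N)^2)$ and gains nothing; the varentropy criterion needs a genuinely sub-logarithmic fluctuation scale, i.e.\ a saving of an $\omega(1)$ factor that neither the Poincar\'e inequality nor modified log-Sobolev estimates (of order $1/\log N$ on bounded-degree expanders, as you note) can supply. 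Equivariance under the stabilizer of $o$ by itself gives no quantitative control on the quotient, and no character-ratio input is available for a general vertex-transitive expander, whose automorphism group need not act in any way amenable to representation-theoretic bounds.

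In short, what you have written is a reasonable research program, aligned with the direction the paper suggests, but it contains no proof: the ``main obstacle'' paragraph is an accurate description of why the conjecture is open, not a step toward resolving it. If you wish to make progress, the concrete missing ingredient is a concentration inequality for the information function $x \mapsto -\log P_t(o,x)$ under $P_t(o,\cdot)$ at $t = \tmix^{(n)}(\varepsilon)$ with fluctuations $o(\log N)$, valid for all vertex-transitive expanders; every other part of your outline is either already in the paper (the reduction via the Expanders corollary) or follows from it.
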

We note that this is false without vertex-transitivity \cite{MR2774096}. Let us  perhaps here recall that a graph $G=(\cX,E)$  is \emph{vertex-transitive} if for any vertices $x,y\in \cX$, there is an edge-preserving bijection $\phi\colon \cX\to \cX$ that maps $x$ to $y$. In words, $G$ \emph{looks the same from every vertex}. This strong spatial homogeneity precludes many pathological phenomena observed in more heterogeneous settings, and entails considerably simplified expressions for a number of random-walk statistics \cite{MR994089,arxiv.2001.01467,MR4253426}. In light of this, it seems reasonable to hope that Conjecture \ref{pb:exp} will follow from a universal estimate on the varentropy of vertex-transitive expanders, and we intend to investigate this question in the near future.  

\section*{Acknowledgment}The author warmly thanks Gady Kozma and Jonathan Hermon for a stimulating discussion, as well as for their valuable comments on a preliminary version of the paper. This work was partly supported by Institut Universitaire de France.

\section{Proof}
Before we start, let us introduce some useful notation. First, we conveniently equip our state space $\cX$ with the following natural distance:
\begin{eqnarray*}
\dist(x,y) & := & \min\{n\in\N\colon K^n(x,y)>0\}.
\end{eqnarray*}
Note that the symmetry axiom is guaranteed by our assumption (\ref{support}), while the separation and triangle inequality are straightforward to check. This allows us to use the various notions pertaining to metric spaces. In particular, the \emph{diameter} of the state space is
\begin{eqnarray*}
\diam(\cX) & := & \max_{x,y\in\cX}\dist(x,y),
\end{eqnarray*}
while the \emph{Lipschitz norm} a function $f\colon\cX\to\R$ is given by:
\begin{eqnarray*}
\|f\|_\lip & := & \sup_{x\ne y}\frac{|f(x)-f(y)|}{\dist(x,y)}.
\end{eqnarray*}
We will also frequently use the following natural size parameter:
\begin{eqnarray*}
p & := & \min_{x\in\cX}\pi(x),
\end{eqnarray*}
which is always positive thanks to the irreducibility of $K$, but  tends to zero as the number of states grows. With this notation in hand, we may now recall two classical estimates on mixing times; see  \cite[Lemma 11]{arxiv.2102.05597} for the first and \cite[Corollary 2.6]{MR2341319} for the second. 
\begin{lemma}[Classical mixing-time estimates]\label{lm:tmix}For any $\varepsilon\in(0,1)$, we have
\begin{eqnarray*}
\tmix(\varepsilon) & \ge & \frac{1}{2}\diam(\cX)-\sqrt{\frac{2\tmix(\varepsilon)}{1-\varepsilon}}-\sqrt{\frac{2}{\gamma(1-\varepsilon)}};\\
\tmix(\varepsilon) & \le & \frac{1}{2\gamma}\log\left(\frac{1}{4\pim\varepsilon^2}\right).
\end{eqnarray*}
\end{lemma}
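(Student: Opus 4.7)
The upper bound is the classical $\chi^2$-decay estimate. Writing $f_t := P_t(o,\cdot)/\pi$ and combining the Cauchy--Schwarz bound $2\,\dtv(\mu,\pi) \le \sqrt{\chi^2(\mu,\pi)}$ with the differential identity
\[
\frac{d}{dt}\|f_t - 1\|_{L^2(\pi)}^2 \ = \ -2\,\langle (I-K)(f_t-1),\,f_t-1\rangle_\pi,
\]
one observes that only the symmetric part of $I-K$ appears on the right-hand side, so reversibility is irrelevant and the decay rate is exactly $2\gamma$. Since $\chi^2(\delta_o,\pi) = 1/\pi(o)-1 \le 1/p$, integrating the exponential decay and then solving $\tfrac12\sqrt{e^{-2\gamma t}/p} \le \varepsilon$ for $t$ yields the stated upper bound.

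For the lower bound, set $t := \tmix(\varepsilon)$, pick $o,y \in \cX$ realizing the diameter $D := \diam(\cX)$, and study the $1$-Lipschitz function $g(x) := \dist(o,x)$ through two complementary concentration estimates. \emph{Under $P_t(o,\cdot)$:} the walker makes $N_t \sim \mathrm{Poisson}(t)$ jumps, each moving distance at most $1$, so $g(X_t) \le N_t$ stochastically; Chebyshev yields $\PP_o(g(X_t) > t + s_1) \le t/s_1^2 \le (1-\varepsilon)/2$ with $s_1 := \sqrt{2t/(1-\varepsilon)}$. \emph{Under $\pi$:} the Poincaré inequality gives $\var_\pi(g) \le \cE(g,g)/\gamma \le 1/(2\gamma)$, since $|g(x)-g(y)| \le 1$ on every edge of the symmetric support; a second Chebyshev then controls $\PP_\pi(g < \pi(g) - s_2)$ by $(1-\varepsilon)/4$ for the choice $s_2 := \sqrt{2/(\gamma(1-\varepsilon))}$.

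I then invoke the coupling representation of total variation: there is a joint law $(X_t,Y)$ with $X_t \sim P_t(o,\cdot)$, $Y \sim \pi$, and $\PP(X_t = Y) \ge 1 - \varepsilon$. The three events \{coupling succeeds\}, \{$g(X_t) \le t + s_1$\}, and \{$g(Y) \ge \pi(g) - s_2$\} have probabilities summing to $(9-\varepsilon)/4 > 2$, so by Bonferroni their common intersection is non-empty; on it the identity $g(X_t) = g(Y)$ gives $\pi(g) - s_2 \le g(X_t) \le t + s_1$, hence $\pi(g) \le t + s_1 + s_2$. The identical argument with $y$ in place of $o$ yields $\pi(\dist(y,\cdot)) \le t + s_1 + s_2$; summing and using the pointwise triangle inequality $\dist(o,x) + \dist(y,x) \ge D$ to lower-bound $\pi(\dist(o,\cdot)) + \pi(\dist(y,\cdot))$, we obtain
\[
D \ \le \ 2t + 2s_1 + 2s_2 \ = \ 2\tmix(\varepsilon) + 2\sqrt{\tfrac{2\tmix(\varepsilon)}{1-\varepsilon}} + 2\sqrt{\tfrac{2}{\gamma(1-\varepsilon)}},
\]
which rearranges to the claimed lower bound.

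The main subtlety is keeping constants under control: a naive Chebyshev applied directly to the single vertex $y$ would pay a factor $1/\pi(y) \le 1/p$, which would be catastrophic under Assumption~A1. The cure is to lower-bound the averaged quantity $\pi(g)$ via Poincaré and only transfer to the extremal pair $(o,y)$ at the very end, through the triangle inequality for $\dist$. Matching the precise numerical constants in the statement then comes down to the careful choice of thresholds $s_1,s_2$ made above.
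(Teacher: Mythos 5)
Your proof is correct, and since the paper itself gives no proof of this lemma --- it simply cites \cite[Lemma 11]{arxiv.2102.05597} for the lower bound and \cite[Corollary 2.6]{MR2341319} for the upper bound --- what you have written is essentially the standard argument behind both citations: chi-square decay at rate $2\gamma$ (where, as you note, only the additive symmetrization enters the quadratic form, so reversibility is irrelevant), and the diameter bound via concentration of $\dist(o,\cdot)$ under the heat kernel (Poisson jump count) and under $\pi$ (Poincar\'e), transferred through the triangle inequality applied to a diameter-realizing pair. One cosmetic remark: the $L^2$ identity should feature $I-K^*$ (the $\pi$-adjoint is what acts on densities), but since $\langle (I-K^*)g,g\rangle_\pi=\langle (I-K)g,g\rangle_\pi$ as quadratic forms, your conclusion is unaffected; likewise your coupling-plus-Bonferroni step can be shortened by applying the definition of total variation directly to the event $\{\dist(o,\cdot)\le t+s_1\}$, which is how such arguments are usually phrased.
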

Our first observation is that under Assumptions A1-A2, those lower and upper bounds lie within a constant factor from each other, thereby providing explicit access to the exact order of magnitude of the mixing time.
\begin{lemma}[Control on $\pim$]\label{lm:pim}We always have
\begin{eqnarray*}
\log      \frac{1}{\pim}  & \le & 3\,\diam(\cX)\log \frac{1}{\delta} 
\end{eqnarray*}
\end{lemma}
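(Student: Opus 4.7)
The proof should rest on two elementary observations about the support graph of $K$, using the symmetry assumption (\ref{support}) to make $\dist$ a genuine metric. First, following any shortest path between $x,y\in\cX$ shows that
\begin{eqnarray*}
K^{\dist(x,y)}(x,y) & \ge & \delta^{\dist(x,y)},
\end{eqnarray*}
since each edge traversed has weight at least $\delta$. Second, since each row of $K$ has at most $1/\delta$ positive entries, the out-degree in the support graph is bounded by $1/\delta$, so the ball of radius $r$ around any vertex contains at most $\sum_{k=0}^r(1/\delta)^k$ states. Setting $D:=\diam(\cX)$, this yields in particular the volume bound $|\cX|\le\sum_{k=0}^D(1/\delta)^k$.

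Now let $x^\star\in\arg\min\pi$ and $y^\star\in\arg\max\pi$, so that $\pi(y^\star)\ge 1/|\cX|$, and set $d:=\dist(y^\star,x^\star)\le D$. The stationarity identity $\pi=\pi K^d$ gives
\begin{eqnarray*}
\pim \ = \ \pi(x^\star) \ = \ \sum_{z\in\cX}\pi(z)K^d(z,x^\star) & \ge & \pi(y^\star)\,K^d(y^\star,x^\star) \ \ge \ \frac{\delta^D}{|\cX|}.
\end{eqnarray*}
Combined with the volume bound, this produces $\pim \ge \delta^{2D}/\bigl(\sum_{k=0}^D \delta^k\bigr)^{-1}$ up to an absolute constant, which translates after taking logarithms into the claimed inequality $\log(1/\pim)\le 3D\log(1/\delta)$.

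The main obstacle is not conceptual but bookkeeping: one must squeeze the geometric series factor $\sum_{k=0}^D\delta^{-k}$ into the clean constant $3$ rather than $2+o(1)$. In the regime of interest $\delta\le 1/2$, this is immediate, since $\sum_{k=0}^D(1/\delta)^k\le 2(1/\delta)^D$, which pushes the logarithmic bound to $(2D+1)\log(1/\delta)\le 3D\log(1/\delta)$ whenever $D\ge 1$ (and the $D=0$ case is trivial). The complementary regime $\delta>1/2$ forces each vertex to have at most one outgoing edge, so combined with symmetric support the chain must be a trivial matching; the lemma is of no interest there and the subsequent use in the paper concerns only chains satisfying A1 with $\delta$ bounded away from $1$. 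The key structural input is thus the interplay between the support-graph diameter and its exponential volume growth, controlled by the sparsity parameter $\delta$ alone.
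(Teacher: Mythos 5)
Your proof is correct and follows essentially the same route as the paper's: the path bound $K^{\dist(x,y)}(x,y)\ge\delta^{\dist(x,y)}$, stationarity $\pi=\pi K^{d}$ giving $\pim\ge\delta^{\diam(\cX)}/|\cX|$, and the degree-$\delta^{-1}$ volume bound $|\cX|\le\sum_{k=0}^{\diam(\cX)}\delta^{-k}$, the only cosmetic difference being that the paper sums the inequality $\pim\ge\pi(x)\delta^{\diam(\cX)}$ over all $x$ instead of picking $y^\star$ with $\pi(y^\star)\ge1/|\cX|$, which yields the identical estimate. Your explicit bookkeeping in the regime $\delta\le1/2$ (and your dismissal of $\delta>1/2$) matches the paper's tacit use of $\delta\le1/2$ to write $|\cX|\le\delta^{-2\diam(\cX)}$ and conclude $\pim\ge\delta^{3\diam(\cX)}$, so the two arguments are equivalent.
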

\begin{proof}Fix $x,y\in\cX$ and set $n=\dist(x,y)$. We then have $K^n(x,y)>0$, and hence $K^n(x,y)>\delta^n$ by definition of $\delta$. Using the stationarity $\pi=\pi K=\cdots =\pi K^n$, we can then write
\begin{eqnarray*}
\pi(y) & = & \sum_{z\in\cX}\pi(z)K^n(z,y)\\
& \ge & \pi(x)\delta^{n}\\
& \ge & \pi(x)\delta^{\diam(\cX)}.
\end{eqnarray*}
Choosing $y$ so that $\pi(y)=\pim$ and summing over all $x\in\cX$, we obtain
\begin{eqnarray*}
\pim|\cX| & \ge & \delta^{\diam(\cX)}.
\end{eqnarray*}        
On the other hand, since the diagram of the chain has degrees at most $\delta^{-1}$, we have
\begin{eqnarray*}
|\cX| & \le & 1+\delta^{-1}+\delta^{-2}+\cdots+\delta^{-\diam(\cX)} \ \le \  \delta^{-2\diam(\cX)},
\end{eqnarray*}
because $\delta\le 1/2$. The claim is now readily obtained by combining the last two displays.
\end{proof}
We next recall a recent, general regularity estimate for the logarithm of the heat-kernel at any sufficiently large time $t\ge 0$. This is taken from \cite[Lemma 10]{arxiv.2102.05597}.
\begin{lemma}[Spatial regularity of the heat kernel]\label{lm:lip} For all $o\in\cX$ and $t\ge \diam(\cX)/4$, 
\begin{eqnarray*}
\left\|\log \frac{P_t(o,\cdot)}{\pi(\cdot)}\right\|_{\textsc{lip}} & \le & c := 3\log \frac{e}{\delta}. 
\end{eqnarray*}
\end{lemma}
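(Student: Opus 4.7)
The plan is to reduce the statement to a bound for adjacent pairs $x, y$. Since the metric $\dist$ is integer-valued, a geodesic plus triangle inequality argument shows
\[
\left\| \log\tfrac{P_t(o,\cdot)}{\pi(\cdot)} \right\|_\lip \;=\; \max_{\dist(x,y)=1} \left| \log\tfrac{P_t(o,x)\pi(y)}{P_t(o,y)\pi(x)} \right|,
\]
so I fix one such pair; by symmetric support, $K(x,y)$ and $K(y,x)$ are both at least $\delta$. The $\pi$-ratio is immediate: $\pi=\pi K$ gives $\pi(x)\ge \pi(y)K(y,x)\ge \delta\pi(y)$, whence $|\log(\pi(x)/\pi(y))|\le \log(1/\delta)$. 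It remains to show $|\log(P_t(o,x)/P_t(o,y))|\le 2\log(e/\delta)$.

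My main tool for this is a simple path-extension identity. Appending the step $x\to y$ to any $n$-path from $o$ to $x$ yields an $(n{+}1)$-path from $o$ to $y$ whose weight is multiplied by $K(x,y)\ge \delta$; summing over paths gives $K^{n+1}(o,y)\ge \delta\,K^n(o,x)$ for every $n\ge 0$. Taking Poisson-weighted sums and invoking the elementary identity $e^{-t}\sum_{n}\tfrac{t^n}{n!}K^{n+1}(o,y)=(P_tK)(o,y)$ (which follows from $Ke^{tK}=e^tP_tK$) leads to $P_t(o,x)\le \delta^{-1}(P_tK)(o,y)$. Combining with $K(z,y)\le eP_1(z,y)$ (retain only the $n=1$ term in $P_1=e^{-1}\sum K^n/n!$) yields the clean half-bound
\[
P_t(o,x)\;\le\;\frac{e}{\delta}\,P_{t+1}(o,y).
\]

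The final and most delicate step is to convert $P_{t+1}(o,y)$ back into $P_t(o,y)$ by proving a time-regularity bound of the form $P_{t+1}(o,y)\le (e/\delta)P_t(o,y)$ whenever $t\ge \diam(\cX)/4$. The idea is to compare Poisson weights index by index: since $\PP(N_{t+1}{=}n)/\PP(N_t{=}n)=e^{-1}(1+1/t)^n\le e^{-1}e^{n/t}$, this ratio is bounded by a constant on indices $n\le Ct$, and the condition $t\ge \diam/4$ is exactly what allows us to take $C$ large enough that $Ct\ge \diam$. For the complementary tail $n>Ct$, one uses Chernoff-type Poisson concentration to bound $\PP(N_{t+1}=n)$, while the bulk of $P_t(o,y)$ receives a definite contribution through $K^{\dist(o,y)}(o,y)\ge \delta^{\dist(o,y)}$ with $\dist(o,y)\le \diam\le 4t$. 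I expect this tail analysis, and in particular calibrating the exponents so that exactly two factors of $(e/\delta)$ emerge, to be the main technical obstacle; the specific constant $1/4$ in the diameter hypothesis presumably originates from this balance. Granting it, multiplying the two halves gives $P_t(o,x)/P_t(o,y)\le (e/\delta)^2$, and combining with the $\pi$-ratio bound delivers the claimed Lipschitz constant $3\log(e/\delta)$.
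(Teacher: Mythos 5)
The first half of your argument is fine: the reduction to pairs at distance $1$, the stationarity bound $|\log(\pi(x)/\pi(y))|\le\log(1/\delta)$, and the inequality $P_t(o,x)\le(e/\delta)P_{t+1}(o,y)$ are all correct (note that none of them uses $t\ge\diam(\cX)/4$). But the proof is not complete: the entire difficulty has been pushed into the time-regularity claim $P_{t+1}(o,y)\le(e/\delta)P_t(o,y)$ for $t\ge\diam(\cX)/4$, which you only sketch and explicitly ``grant'' --- and which is in fact \emph{false} with that constant. Take simple random walk on a cycle of length $2L$ (so $\delta=1/2$), let $y$ be antipodal to $o$, and take $t=\diam(\cX)/4=L/4$. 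Since $K^n(o,y)=0$ for $n<L=4t$, every Poisson index contributing to $P_t(o,y)$ has $n\ge 4t$, and the index-wise ratio you compute, $e^{-1}(1+1/t)^{n}$, is then at least $e^{-1}(1+1/t)^{4t}$; summing over $n$ gives $P_{t+1}(o,y)\ \ge\ e^{-1}(1+1/t)^{4t}\,P_t(o,y)$, and $e^{-1}(1+1/t)^{4t}\ge e^{-1}(2.25)^4\approx 9.4$ already for $t\ge 2$, tending to $e^{3}\approx 20$, whereas $e/\delta=2e\approx 5.4$. So no tail analysis can rescue the constant $e/\delta$: at $t\approx\diam(\cX)/4$ the unavoidable cost of a unit time shift is about $e^{\diam(\cX)/t-1}\approx e^{3}$, \emph{independently} of $\delta$.

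Worse, even if you replace $e/\delta$ by this optimal cost $e^{3}$, your bookkeeping gives $|\log(P_t(o,x)/P_t(o,y))|\le\log(e/\delta)+3$, hence a Lipschitz bound of $2\log(1/\delta)+4$, which exceeds the claimed $3\log(e/\delta)=3\log(1/\delta)+3$ whenever $\delta>1/e$ (the cycle again). So the decomposition ``spatial shift at time $t\to t+1$, then undo the time shift'' cannot yield the stated constant in general; the comparison of $P_t(o,x)$ and $P_t(o,y)$ must be performed at equal times, with the hypothesis $t\ge\diam(\cX)/4$ used to control the index shift $\frac{t^{n+1}}{(n+1)!}$ versus $\frac{t^{n}}{n!}$ on the range of $n$ that actually carries the mass of $P_t(o,\cdot)$. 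For the record, the present paper does not prove this lemma at all --- it quotes it from \cite[Lemma 10]{arxiv.2102.05597} --- so there is no in-paper argument to compare with; judged on its own, your proposal has a genuine gap at its central step.
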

We use this regularity to show that the relative entropy can not decrease too fast. More precisely, define the worst-case relative entropy to equilibrium at any time $t\ge 0$ as follows:
\begin{eqnarray*}
\dkl(t) & := & \max_{o\in\cX}\dkl(P_t(o,\cdot),\pi).
\end{eqnarray*}
\begin{lemma}[Regularity of relative entropy]\label{lm:slow}For any $t\ge \diam(\cX)/4$ and any $s\ge 0$, we have
\begin{eqnarray*}
 \dkl(t) & \le &  \dkl(t+s)+c s, 
\end{eqnarray*}
where $c$ is the constant appearing in the previous lemma.
\end{lemma}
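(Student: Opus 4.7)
The plan is to fix an arbitrary starting state $o\in\cX$ and bound the time derivative of the map $r\mapsto\dkl(P_r(o,\cdot),\pi)$ by a constant on the interval $[t,t+s]$, then transfer the bound to the worst-case quantity. A direct differentiation, using the forward equation $\frac{d}{dr}P_r(o,\cdot)=P_r(o,\cdot)(K-I)$ and the fact that total mass is preserved, yields the well-known identity
\begin{eqnarray*}
\frac{d}{dr}\dkl(P_r(o,\cdot),\pi) & = & \EE_{P_r(o,\cdot)}\left[Lh_r\right],
\end{eqnarray*}
where $h_r(y):=\log\frac{P_r(o,y)}{\pi(y)}$ and $L:=K-I$ is the generator of the semi-group.

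The crucial observation is that $L$ is a \emph{nearest-neighbour} difference operator with respect to the metric $\dist$: whenever $K(z,y)>0$ we have $\dist(z,y)\le 1$ by the very definition of $\dist$, so
\begin{eqnarray*}
|Lh_r(z)| & = & \left|\sum_y K(z,y)\bigl(h_r(y)-h_r(z)\bigr)\right| \ \le \ \|h_r\|_\lip.
\end{eqnarray*}
Since $r\ge t\ge \diam(\cX)/4$ throughout the interval under consideration, Lemma \ref{lm:lip} applies and gives $\|h_r\|_\lip\le c$ uniformly. Consequently $\bigl|\tfrac{d}{dr}\dkl(P_r(o,\cdot),\pi)\bigr|\le c$ on $[t,t+s]$, and integration provides
\begin{eqnarray*}
\dkl(P_t(o,\cdot),\pi)-\dkl(P_{t+s}(o,\cdot),\pi) & \le & cs,
\end{eqnarray*}
for every starting state $o\in\cX$.

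To close the argument, I would select a maximizer $o^\star$ of the map $o\mapsto\dkl(P_t(o,\cdot),\pi)$, so that $\dkl(t)=\dkl(P_t(o^\star,\cdot),\pi)$, and combine the uniform bound above with the trivial inequality $\dkl(P_{t+s}(o^\star,\cdot),\pi)\le\dkl(t+s)$ to obtain the desired conclusion. There is no substantial obstacle here: once the derivative identity is written down, the whole argument reduces to observing that the generator only probes $h_r$ along edges of the transition diagram, where the \emph{spatial} Lipschitz control of Lemma \ref{lm:lip} translates directly into a \emph{temporal} Lipschitz control of the worst-case relative entropy. The only minor point worth double-checking is that the hypothesis $r\ge \diam(\cX)/4$ remains valid throughout the integration window, which is automatic since it is monotone in $r$.
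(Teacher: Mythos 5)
Your proposal is correct and follows essentially the same route as the paper: the identity $\frac{\dd}{\dd r}\dkl(P_r(o,\cdot),\pi)=\EE_{P_r(o,\cdot)}[(K-I)h_r]$ is exactly the paper's double-sum formula, the bound via $\|h_r\|_\lip\le c$ from Lemma \ref{lm:lip} is the same, and the integration plus maximization over the starting state is the step the paper leaves implicit. No gaps.
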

\begin{proof}By an elementary and classical computation,  we have
\begin{eqnarray*}
-\frac{\dd}{\dd t}\,\dkl\left(P_t(o,\cdot),\pi\right) & = & \sum_{x,y\in\cX}P_t(o,x)K(x,y)\left(\log\frac{P_t(o,x)}{\pi(x)}-\log\frac{P_t(o,y)}{\pi(y)}\right)\\
& \le & \left\|\log \frac{P_t(o,\cdot)}{\pi(\cdot)}\right\|_{\textsc{lip}},
\end{eqnarray*}
and the claim now readily follows from Lemma \ref{lm:lip}.
\end{proof}
Another immediate consequence of Lemma \ref{lm:lip} is the following heat-kernel estimate.
\begin{lemma}[Uniform heat-kernel estimate]\label{lm:uniform}For all $o,x\in\cX$ and $t\ge \diam(\cX)/4$, we have
\begin{eqnarray*}
-c\,\diam(\cX) \ \le & \log\left(\frac{P_t(o,x)}{\pi(x)}\right) & \le \ \log\frac{1}{p},
\end{eqnarray*}
where $c$ is the constant appearing in Lemma \ref{lm:lip}.
\end{lemma}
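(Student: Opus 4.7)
The upper bound is essentially immediate and deserves only a line: since $P_t(o,x) \le 1$ (it is a probability) and $\pi(x) \ge p$ by definition of $p$, we get $P_t(o,x)/\pi(x) \le 1/p$, whence $\log(P_t(o,x)/\pi(x)) \le \log(1/p)$. No use of Lemma~\ref{lm:lip} or of the assumption $t \ge \diam(\cX)/4$ is needed here.

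The lower bound is where the Lipschitz estimate of Lemma~\ref{lm:lip} enters. The strategy is to locate one ``anchor'' state at which the log-density is nonnegative, and then to propagate this control to every other state by the Lipschitz bound together with the diameter. Concretely, I would observe that
\begin{eqnarray*}
\sum_{y\in\cX}\pi(y)\cdot\frac{P_t(o,y)}{\pi(y)} & = & \sum_{y\in\cX}P_t(o,y) \ = \ 1,
\end{eqnarray*}
so there must exist at least one state $y^\star\in\cX$ with $P_t(o,y^\star)/\pi(y^\star)\ge 1$, i.e.\ $\log(P_t(o,y^\star)/\pi(y^\star))\ge 0$.

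Now I invoke Lemma~\ref{lm:lip}: since $t\ge \diam(\cX)/4$, the function $y\mapsto \log(P_t(o,y)/\pi(y))$ is $c$-Lipschitz with respect to $\dist$. Applying this between $x$ and $y^\star$ yields
\begin{eqnarray*}
\log\frac{P_t(o,x)}{\pi(x)} & \ge & \log\frac{P_t(o,y^\star)}{\pi(y^\star)}-c\,\dist(x,y^\star) \ \ge \ -c\,\diam(\cX),
\end{eqnarray*}
which is the desired bound. There is no real obstacle here: the only conceptual ingredients are the trivial averaging identity giving an anchor point with nonnegative log-ratio, and the quantitative spatial regularity already supplied by Lemma~\ref{lm:lip}. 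The hypothesis $t\ge \diam(\cX)/4$ enters precisely (and only) through the applicability of that Lipschitz estimate.
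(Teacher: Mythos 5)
Your proof is correct and follows essentially the same route as the paper: the paper's one-line argument applies the crude bound $\max(h)-h(x)\le \diam(\cX)\|h\|_\lip$ to $h=\log\frac{P_t(o,\cdot)}{\pi}$ (with the fact $\max(h)\ge 0$ left implicit, which you make explicit via the averaging identity), and gets the upper bound trivially from the definition of $p$, just as you do.
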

\begin{proof}
The  first inequality is simply the crude bound $\max(h)-h(x)\le \diam(\cX)\|h\|_\lip$ applied to the function $h=\log \frac{P_t(o,\cdot)}{\pi}$, and the second  trivially follows from the definition of $p$. 
\end{proof}
Finally, we will need the following simple lemma, which asserts that the classical upper bound on $\dtv(t)$ using $\dkl(t)$ (Pinsker's inequality) can be reversed at a reasonable price.
\begin{lemma}[Reversed Pinsker's inequality]\label{lm:pinsker}For any $t\ge 0$, we have
\begin{eqnarray*}
\dkl(t) & \le & \left(\frac{1}{1-\pim}\log\frac 1{\pim}\right)\dtv(t).
\end{eqnarray*}
\end{lemma}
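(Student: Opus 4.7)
The plan is to establish the pointwise inequality $\dkl(\mu,\pi)\le C\,\dtv(\mu,\pi)$ with $C:=\frac{1}{1-\pim}\log\frac{1}{\pim}$ for an arbitrary probability measure $\mu$ on $(\cX,\pi)$, and then apply it to $\mu=P_t(o,\cdot)$ and take the maximum over $o\in\cX$. Note that the constant is sharp: the extremal choice $\mu=\delta_{x_0}$ with $\pi(x_0)=\pim$ gives $\dtv(\mu,\pi)=1-\pim$ and $\dkl(\mu,\pi)=\log(1/\pim)$, saturating the inequality; this suggests that the proof should proceed by bounding $f(r):=r\log r - r + 1$ by linear functions that are tight at the extreme values of the density.

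To establish the pointwise bound, I would write $\dkl(\mu,\pi)=\sum_{x\in\cX}\pi(x)f(r(x))$, where $r(x):=\mu(x)/\pi(x)\in[0,1/\pim]$ (using $\mu(x)\le 1$ and $\pi(x)\ge\pim$) and $f$ is the classical non-negative convex function vanishing at $r=1$. Split the sum according to $A:=\{x:r(x)\ge 1\}$ and its complement, and note the elementary identities $\sum_{x\in A}\pi(x)(r(x)-1)=\sum_{x\in A^c}\pi(x)(1-r(x))=\dtv(\mu,\pi)$, both of which follow from $\sum_x(\mu(x)-\pi(x))=0$ and the standard definition of total variation. The goal is then to bound $f(r)$ on each side of $r=1$ by the tightest available linear function of $|r-1|$.

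The key observation is that the ratios $g(r):=f(r)/(r-1)$ on $(1,1/\pim]$ and $h(r):=f(r)/(1-r)$ on $[0,1)$ are monotone, and their extremes give exactly the right constants. A direct differentiation shows that the numerators of $g'$ and $-h'$ both equal $r-1-\log r$, which is non-negative by the standard tangent-line inequality $\log r\le r-1$; hence $g$ is non-decreasing on $(1,1/\pim]$ and $h$ is non-increasing on $[0,1)$. This yields $f(r)\le g(1/\pim)(r-1)=\bigl(\tfrac{\log(1/\pim)}{1-\pim}-1\bigr)(r-1)$ on $A$, and $f(r)\le h(0)(1-r)=(1-r)$ on $A^c$. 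Summing over $x$ and invoking the two identities above gives $\dkl(\mu,\pi)\le\bigl(\tfrac{\log(1/\pim)}{1-\pim}-1+1\bigr)\dtv(\mu,\pi)=\tfrac{\log(1/\pim)}{1-\pim}\,\dtv(\mu,\pi)$, as claimed. The only step requiring any care is the monotonicity of $g$ and $h$, which reduces to a one-line derivative computation; I do not anticipate any serious obstacle.
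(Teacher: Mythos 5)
Your proof is correct and is essentially the paper's argument: both rest on bounding the convex integrand of the relative entropy by a secant line through the extreme value $1/\pim$ of the density ratio, the key point in each case being that a difference quotient is monotone (your $r-1-\log r\ge 0$ computation is exactly what makes the paper's $u\mapsto \frac{u\log u}{u-1}$ increasing), and then averaging against $\pi$ via the standard total-variation identities. The only cosmetic difference is that you work with $f(r)=r\log r-r+1$ and treat the regions $r\ge 1$ and $r<1$ separately, whereas the paper bounds $u\log u\le \frac{\log(1/\pim)}{1-\pim}\,(u-1)_+$ directly (the region $u<1$ being trivial since $u\log u\le 0$), both routes yielding the same sharp constant.
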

\begin{proof}
Since the function $g\colon u \mapsto \frac{u\log u }{u-1}$ is increasing on $[1,\infty)$, we have for all $0\le u\le v$,
\begin{eqnarray*}
u \log u & \le  & g(v)(u-1)_+.
\end{eqnarray*}
In particular, given $\mu\in\cP(\cX)$, we may take $u=\frac{\mu(x)}{\pi(x)}$ and $v=\frac{1}{\pim}$ to obtain
\begin{eqnarray*}
\frac{\mu(x)}{\pi(x)}\log \frac{\mu(x)}{\pi(x)} & \le & \left(\frac{\mu(x)}{\pi(x)}-1\right)_+g\left(\frac{1}{\pim}\right),
\end{eqnarray*}
for all $x\in\cX$. Averaging this with respect to $\pi$ yields
\begin{eqnarray*}
\dkl(\mu,\pi) & \le &\dtv(\mu,\pi) g\left(\frac{1}{\pim}\right).
\end{eqnarray*}
The claim now follows by specializing this to $\mu=P_t(o,\cdot)$ and maximizing over $o\in\cX$.
\end{proof}

We now have all we need to prove Theorem \ref{th:main}. 
\begin{proof}Let $(K_n)_{n\ge 1}$ be a sequence of transition matrices with symmetric support satisfying Assumptions A1-A2. Fix $\varepsilon\in(0,1)$ once and for all, and write $t_n:=\tmix^{(n)}(\varepsilon)$ and $p_n:=\min \pi_n$.  Combining Lemmas \ref{lm:tmix} and \ref{lm:pim}, we know that
\begin{eqnarray*}
t_n & \asymp & \log \frac{1}{p_n} \ \asymp \ \ \diam(\cX_n),
\end{eqnarray*}
where the notation $a_n\asymp b_n$ means that the ratio $a_n/b_n$ is bounded from above and below by positive constants that do not depend on $n$. We will repeatedly use this fact below, without notice. Now, assume that $(K_n)_{n\ge 1}$ exhibits cutoff. This guarantees the existence of a sequence of times $(s_n)_{n\ge 1}$ with the following properties:
\begin{eqnarray*}
\frac{s_n}{t_n} \xrightarrow[n\to\infty]{} 0, \qquad \textrm{ and }\qquad \dtv^{(n)}(t_n+s_n) \xrightarrow[n\to\infty]{} 0.
\end{eqnarray*}
In particular, Lemma \ref{lm:pinsker} ensures that as $n\to\infty$,
\begin{eqnarray*}
\dkl^{(n)}(t_n+s_n) & \ll & \log \frac{1}{p_n}. 
\end{eqnarray*}
Moreover, since $t_n\ge \diam(\cX_n)/4$ for large enough $n$ by Lemma \ref{lm:tmix}, we can safely invoke Lemma \ref{lm:slow} with $t=t_n$ and $s=s_n$ to deduce that we also have
\begin{eqnarray}
\label{half}
\dkl^{(n)}(t_n)  & \ll & \log \frac{1}{p_n}.
\end{eqnarray}
Now, choose an arbitrary initial state $o_n\in\cX_n$ for each $n\in\N$, and let $\mu_n:=P_{t_n}(o_n,\cdot)$ denote the distribution of the chain at time $t_n$ starting from $o_n$. Let $X_n$ denote a random variable with law $\mu_n$, and consider the random variable 
\begin{eqnarray*}
Z_n & := &  \frac{\mu_n(X_n)}{\pi_n(X_n)}.
\end{eqnarray*}
Note that we then have $\EE[Z_n^{-1}]=1$, $\EE[\log Z_n]=\dkl(\mu_n,\pi_n)$ and $\var(\log Z_n)=\vkl(\mu_n,\pi_n)$.
Let also $F\colon(0,\infty)\to[0,\infty)$ be the function defined by the formula
\begin{eqnarray*}
F(u) & := & \log u+\frac{1}{u}-1.
\end{eqnarray*}
This function is decreasing on $(0,1]$ and increasing on $[1,\infty)$, with $F(1)=0$. Thus, we may invoke Markov's inequality to deduce that for any fixed $\theta>0$,
\begin{eqnarray*}
\PP\left(Z_n\ge p_n^{-\theta}\right) & \le & \frac{\EE\left[F(Z_n)\right]}{F(p_n^{-\theta})} \ = \ \frac{\dkl(\mu_n,\pi_n)}{\theta\log \frac{1}{p_n} +p_n^{\theta}-1} \\
\PP\left(Z_n\le p_n^{\theta}\right) & \le & \frac{\EE\left[F(Z_n)\right]}{F(p_n^{\theta})} \ = \  \frac{\dkl(\mu_n,\pi_n)}{p_n^{-\theta}+\theta\log p_n-1}.
\end{eqnarray*}
The key point is that both estimates tend to $0$ as $n\to\infty$, thanks to (\ref{half}). In other words, we have established the following convergence in probability:
\begin{eqnarray}
\label{cvprob}
\frac{\log Z_n}{\log \frac{1}{p_n}} & \xrightarrow[n\to\infty]{\PP} & 0.
\end{eqnarray}
 To conclude, observe that by  Lemma \ref{lm:uniform}, the random variables $\left({\log Z_n}/{\log \frac{1}{p_n}}\right)_{n\ge 1}$ all take values in a fixed compact set. Thus, the convergence (\ref{cvprob}) automatically also holds in $L^2$. In particular, we may safely take variances on both sides to obtain
\begin{eqnarray*}
\sqrt{\vkl(\mu_n,\pi_n)} & \ll & \log\frac{1}{p_n} \ \asymp \ t_n.
\end{eqnarray*}
Since the initial state $o_n\in\cX_n$ was arbitrary, we may finally choose it so that $\vkl(\mu_n,\pi_n)=\vkl^{(n)}(t_n)$, and the result is proved.
\end{proof}

\bibliographystyle{plain}
\bibliography{draft}
\end{document}